\newcommand{\Z}{\mathbb Z}
\theoremstyle{plain} 
\newtheorem{theorem}{Theorem}
\newtheorem{corollary}[theorem]{Corollary} 
\newtheorem{lemma}[theorem]{Lemma}
\theoremstyle{definition}
\author{David Richeson}
\title{How Much String to String a Cardioid?}
\date{\today}                                           
\begin{document}
\thispagestyle{empty}
\maketitle
\begin{abstract}
A residue design is an artistic geometric construction in which we have $n$ equally-spaced points on a circle numbered 0 through $n-1$, and we join with a line segment each point $k$ to $ak$ modulo $n$ for some fixed $a\ge 2.$ The envelopes of these lines are epicycloids, like cardioids. In this note, we prove that the sum of the lengths of these line segments has a surprisingly simple closed form. In particular, if one wants to make one of these designs with string, it is easy to calculate how much string is required.
\end{abstract}

Place $n$ equally-spaced points around a circle, label them 0 through $n-1$, and for some choice of $a\ge 2$, draw a line segment from each point $k$ to the point $ak$ modulo $n$. The envelope of these lines is an epicycloid (see, e.g., \cite{Beardon:1989}). Figure~\ref{fig:chord5} shows examples when $n=83$---a cardioid ($a=2$), a nephroid ($a=3$), and a trefoiloid ($a=4$). Such drawings are known as \emph{residue designs}  (see, e.g., \cite{Akopyan:2015, Archibald:1900,Johnson:1998,Locke:1972,Moore:1981,Perucca:2024,Picard:1971,Richeson:2023}). 

\begin{figure}[ht]
\includegraphics{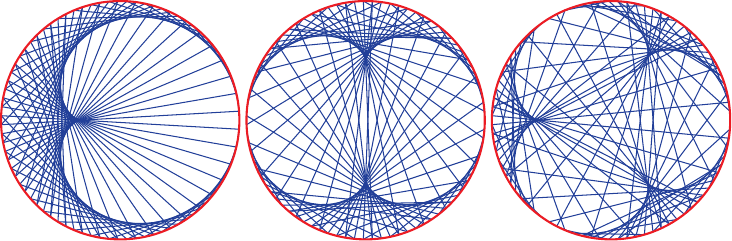}
\caption{Residue designs formed from 83 points on a circle and multiplicative factors of 2, 3, and 4: a cardioid, a nephroid, and a trefoiloid, respectively.}
\label{fig:chord5}
\end{figure}

It is possible to make these designs using pencil and paper or computer illustration software. This geometry also explains the caustics in the bottom of your coffee mug when light shines in from a point at its edge (cardioid) or from infinitely far away (nephroid). But perhaps the most aesthetically pleasing versions are made using string. We could place $n$ nails in a circle and run a string between nails $k$ and $ak$ modulo $n$ for all $k$. Made that way, the natural question is: How much string do we need? In this article, we prove that the exact length can be expressed in a surprisingly concise form.

Theorem~\ref{thm:stringlength2} answers our string-length question for many $a$ and $n$. 

\begin{theorem}
\label{thm:stringlength2}
In a circle of radius $r$, the sum of the distances from the point $k$ to $ak$ mod $n$ for $k=0,1,2,\ldots,n-1$ is \[S(n,a,r):=2rg\cot\left(\frac{\pi g}{2n}\right),\] where $g=\gcd(a-1,n)$.
\end{theorem}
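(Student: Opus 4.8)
The plan is to write each chord length explicitly as a sine, turn the total into a trigonometric sum, and then collapse that sum with a standard identity. First I would place the $k$th point at $\left(r\cos\frac{2\pi k}{n},\, r\sin\frac{2\pi k}{n}\right)$, so that the chord joining point $k$ to point $ak \bmod n$ subtends a central angle of $\frac{2\pi(a-1)k}{n}$ and therefore has length $2r\left|\sin\!\left(\frac{\pi(a-1)k}{n}\right)\right|$. The absolute value is exactly what makes the reduction mod $n$ harmless: since $|\sin|$ has period $\pi$, the quantity $\left|\sin\!\left(\frac{\pi(a-1)k}{n}\right)\right|$ depends only on $(a-1)k \bmod n$, so there is no need to track whether $ak$ exceeds $n$. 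Summing over $k$ then gives $S(n,a,r)=2r\sum_{k=0}^{n-1}\left|\sin\!\left(\frac{\pi(a-1)k}{n}\right)\right|$.

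Next I would exploit the structure of the map $k\mapsto (a-1)k \bmod n$ on $\Z/n\Z$. Writing $g=\gcd(a-1,n)$ and $m=n/g$, this map is a group homomorphism whose image is precisely the set of multiples of $g$, namely $\{0,g,2g,\ldots,(m-1)g\}$, and each such value is attained for exactly $g$ choices of $k$. Since $\left|\sin\!\left(\frac{\pi(a-1)k}{n}\right)\right|=\sin\!\left(\frac{\pi s}{n}\right)$ whenever $s=(a-1)k\bmod n\in\{0,\ldots,n-1\}$ (the sine being nonnegative because $\frac{\pi s}{n}\in[0,\pi)$), the sum regroups as $\sum_{k=0}^{n-1}\left|\sin\!\left(\frac{\pi(a-1)k}{n}\right)\right| = g\sum_{j=0}^{m-1}\sin\!\left(\frac{\pi j g}{n}\right) = g\sum_{j=0}^{m-1}\sin\!\left(\frac{\pi j}{m}\right)$.

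Finally I would evaluate $\sum_{j=0}^{m-1}\sin\!\left(\frac{\pi j}{m}\right)$ using the classical identity $\sum_{j=0}^{m-1}\sin(j\theta)=\frac{\sin(m\theta/2)\,\sin((m-1)\theta/2)}{\sin(\theta/2)}$ with $\theta=\pi/m$. The numerator simplifies to $\sin\!\left(\frac{\pi}{2}\right)\cos\!\left(\frac{\pi}{2m}\right)=\cos\!\left(\frac{\pi}{2m}\right)$, so the whole sum collapses to $\cot\!\left(\frac{\pi}{2m}\right)=\cot\!\left(\frac{\pi g}{2n}\right)$. Multiplying through by $2rg$ yields the claimed closed form.

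I expect the chief subtlety to lie in the middle step rather than in the trigonometry: one must carefully verify that $(a-1)k\bmod n$ runs over each multiple of $g$ with multiplicity exactly $g$, and that passing to the absolute value correctly discards the sign $(-1)^q$ introduced when $(a-1)k=qn+s$. Once the multiplicities and the sign are pinned down, the Dirichlet-kernel evaluation is routine, and the factor $g$ together with the reduction $m=n/g$ delivers the $\cot\!\left(\frac{\pi g}{2n}\right)$ appearing in the statement.
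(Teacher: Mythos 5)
Your proposal is correct and follows essentially the same route as the paper's proof: reduce each chord length to $2r\sin\bigl(\tfrac{\pi j}{n}\bigr)$ for the separation $j=(a-1)k \bmod n$, observe that these separations are exactly the multiples of $g=\gcd(a-1,n)$, each occurring $g$ times, and collapse the resulting sum with Lagrange's identity. Your explicit use of coordinates and $|\sin|$ to justify the reduction mod $n$, and your careful count of multiplicities via the homomorphism $k\mapsto(a-1)k$, are just more detailed versions of steps the paper treats via its chord-length lemma and the subgroup $\langle a-1\rangle\subseteq\Z_n$.
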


However, for some $n$ and $a$, $S(n,a,r)$ may double-count some segments. For instance, when $n=56$ and $a=3$, $3\cdot 7= 21$ and $3\cdot 21=63\equiv 7\bmod 56.$ So, $S(56,3,r)$ counts the distance between 7 and 21 twice, whereas the residue design would require only one string between these points. In the language of graph theory, we don't want our graph to have parallel edges. Theorem~\ref{thm:segmentlength3} gives the general theorem---a closed form for the sum without double-counting these lengths.

\begin{theorem}
\label{thm:segmentlength3}
The sum of the lengths of the line segments in a residue design formed from $n$ points in a circle of radius $r$ and a multiplicative factor $a$ is \[L(n,a,r):=2rg_1\cot\left(\frac{\pi g_1}{2n}\right)-rg_2\cot\left(\frac{\pi g_2}{2m}\right),\] where $m$ is the largest divisor of $n$ for which $a^2\equiv 1\bmod m$, $g_1=\gcd(a-1,n)$, and $g_2=\gcd(a-1,m)$.
\end{theorem}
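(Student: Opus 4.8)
The plan is to reduce the claim to the already-proved directed-sum formula $S(n,a,r)$ of Theorem~\ref{thm:stringlength2} by identifying and subtracting exactly the overcounting. First I would observe that $S(n,a,r)$ sums the length of the \emph{directed} segment $k\to ak$ over all $n$ sources $k$, so each \emph{undirected} chord $\{i,j\}$ of the residue design is counted once if only one of the two directed segments $i\to j$, $j\to i$ occurs, and twice if both occur. Since each source determines its outgoing segment uniquely, no chord can be counted more than twice; hence $L = S - D$, where $D$ is the total length of the chords counted twice.

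Next I would characterize the doubly-counted chords. The chord $\{i,ai\}$ is counted twice exactly when $ai\to i$ is also a segment, i.e.\ when $a(ai)\equiv i$, equivalently $(a^2-1)i\equiv 0 \pmod n$. Writing $d=\gcd(a^2-1,n)$ and using $\gcd((a^2-1)/d,\,n/d)=1$, the solutions are precisely the $d$ multiples of $n/d$. A divisor of $n$ satisfies $a^2\equiv 1$ modulo it iff it divides $a^2-1$, so the largest such divisor is $m=\gcd(a^2-1,n)=d$; thus the doubly-counted sources are exactly the $m$ multiples of $n/m$.

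The key step is to recognize that these $m$ points, sitting at the angles $2\pi t/m$ for $t=0,\dots,m-1$ on the \emph{same} circle of radius $r$, together with the factor $a$ reduced modulo $m$, form a residue design in their own right: the source at index $t\cdot(n/m)$ maps to index $(at\bmod m)\cdot(n/m)$. Because every such source satisfies $a^2t\equiv t\pmod m$, every nondegenerate segment of this sub-design occurs in both directions, so its directed sum $S(m,a,r)$ counts each doubly-counted chord exactly twice. Therefore $D=\tfrac12 S(m,a,r)$, and applying Theorem~\ref{thm:stringlength2} to both $S(n,a,r)$ and $S(m,a,r)$ gives
\[
L(n,a,r)=S(n,a,r)-\tfrac12 S(m,a,r)=2rg_1\cot\!\left(\frac{\pi g_1}{2n}\right)-rg_2\cot\!\left(\frac{\pi g_2}{2m}\right),
\]
with $g_1=\gcd(a-1,n)$ and $g_2=\gcd(a-1,m)$, as claimed.

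The main obstacle I anticipate is the bookkeeping that guarantees the correction term is exactly right: I must confirm both that the doubly-counted sources coincide \emph{precisely} with the sub-design vertices (no extra solutions and none missing) and that each reversed pair is counted exactly twice and never more. In particular, loops, where $ak\equiv k$, contribute length zero and must be excluded from the chord count; since they appear as zero-length loops in $S(m,a,r)$ as well, they disturb neither side and leave the factor of $\tfrac12$ intact.
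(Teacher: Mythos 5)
Your proof is correct and takes essentially the same approach as the paper: both identify the doubled-or-degenerate segments as the set $\{s\in\Z_n : a^2s\equiv s\bmod n\}$, recognize that these segments form a scaled copy of the residue design on $m$ points with the same factor $a$, and conclude $L(n,a,r)=S(n,a,r)-\tfrac12 S(m,a,r)$ by applying Theorem~\ref{thm:stringlength2} twice. Your direct gcd computation (including the neat observation that $m=\gcd(a^2-1,n)$) simply inlines what the paper delegates to Lemmas~\ref{lem:cyclicsubgroup}--\ref{lem:subgroupgenerator}.
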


Of course, to make one of these designs with string, we need only the approximate string length. Thus, we have the following easier-to-compute formula.

\begin{corollary}\label{cor:stringlength}
For $r$, $n$, $a$, and $m$ as in Theorem~\ref{thm:segmentlength3}, $L(n,a,r)\approx (4n-2m)r/\pi.$ 
\end{corollary}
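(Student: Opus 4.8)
The plan is to start from the exact formula in Theorem~\ref{thm:segmentlength3}, namely
\[
L(n,a,r)=2rg_1\cot\left(\frac{\pi g_1}{2n}\right)-rg_2\cot\left(\frac{\pi g_2}{2m}\right),
\]
and approximate each cotangent term using the small-angle behavior of $\cot$. The key observation is that for small $x$ we have $\cot x \approx 1/x$, so $g_1\cot(\pi g_1/(2n))\approx g_1\cdot \frac{2n}{\pi g_1}=\frac{2n}{\pi}$, and similarly $g_2\cot(\pi g_2/(2m))\approx \frac{2m}{\pi}$. Substituting these into $L$ gives $2r\cdot\frac{2n}{\pi}-r\cdot\frac{2m}{\pi}=(4n-2m)r/\pi$, which is exactly the claimed approximation.

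The first step I would take is to justify why the relevant angles are actually small, so that the approximation $\cot x\approx 1/x$ is legitimate. Here $x=\pi g_1/(2n)$ and $x=\pi g_2/(2m)$; since $g_1=\gcd(a-1,n)$ divides $n$ and $g_2=\gcd(a-1,m)$ divides $m$, the ratios $g_1/n$ and $g_2/m$ are at most $1$, but for the approximation to be good we want them to be genuinely small, which holds precisely when $n$ (and $m$) are large relative to $g_1$ (and $g_2$)---the regime of interest for any realistic string design with many nails.

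Next I would make the substitution explicit and combine the two approximated terms. Using $\cot x=\frac{1}{x}-\frac{x}{3}-\cdots$, the leading term $1/x$ dominates, and substituting $x=\pi g_i/(2n_i)$ (with $n_1=n$, $n_2=m$) cancels the factor $g_i$ cleanly, which is the pleasant feature that makes the final expression independent of $g_1$ and $g_2$. The error in each term is of order $x$ itself (times constants), i.e.\ $O(g_i/n_i)$, so the total relative error is controlled by how large $n$ and $m$ are.

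I expect the only real subtlety to be stating honestly what ``$\approx$'' means here, rather than any difficulty in the algebra, which is routine. One could phrase it as an asymptotic statement (the ratio $L(n,a,r)\big/\big((4n-2m)r/\pi\big)\to 1$ as $n\to\infty$ with appropriate constraints) or simply note that the next-order correction comes from the $-x/3$ term in the expansion of $\cot x$ and is negligible for the large $n$ used in practice. Since the corollary is explicitly motivated by the practical need for only an approximate length, a brief remark invoking $\cot x\approx 1/x$ for small $x$, followed by the one-line substitution, should suffice; the main thing to get right is simply confirming that both cotangent arguments go to zero so the approximation applies to each term simultaneously.
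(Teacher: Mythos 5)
Your proposal is correct and takes essentially the same route as the paper: the paper also expands $\cot x = \frac1x - \frac x3 - \cdots$ and keeps the leading term, applying it to $S(n,a,r)$ and $\tfrac12 S(m,a,r)$, which are precisely your two cotangent terms, with the same cancellation of $g_1$ and $g_2$. If anything, your discussion of when the small-angle approximation is legitimate is more careful than the paper's, which justifies it with the claim $g<\sqrt{n}$ (not true in general, e.g.\ $n=12$, $a=7$ gives $g=6$), whereas you correctly identify the relevant condition as $g_1/n$ and $g_2/m$ being small.
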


This note is organized as follows. We prove Theorem~\ref{thm:stringlength2} in Section~\ref{sec:proof} and Theorem~\ref{thm:segmentlength3} and Corollary~\ref{cor:stringlength} in Section~\ref{sec:doublecount}. In Section~\ref{sec:onestring}, we combine our results with the results in \cite{Richeson:2023} about residue designs that can be made with one continuous string to obtain Corollary~\ref{cor:prime}. 

\section{The Sum of the Distances.}
\label{sec:proof}
The proof of Theorem~\ref{thm:stringlength2} relies on a trigonometric identity due to Lagrange. We omit the proof, but to prove it, one can apply the identity $\sin\alpha\sin\beta=\tfrac12(\cos(\alpha-\beta)-\cos(\alpha+\beta))$ to $\sin(j\theta)=\sin (j\theta)\sin(\tfrac\theta 2)$ to obtain a telescoping sum, or one can use complex numbers and geometric series.

\begin{lemma}[Lagrange's identity]
If $\theta$ is not a multiple of $2\pi$, then 
\[\sum_{j=0}^m \sin (j\theta)
=\frac{\sin \left(\frac{m\theta}{2}\right)\sin \left(\frac{(m+1)\theta}{2}\right)}{\sin\left(\frac\theta 2\right)}.
\]
\end{lemma}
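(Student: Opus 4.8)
The plan is to follow the telescoping route suggested in the text, since it keeps every quantity real and makes the role of the hypothesis on $\theta$ transparent. First I would multiply the target sum by the factor $\sin(\theta/2)$, which is legitimate and invertible precisely because $\theta$ is not a multiple of $2\pi$, so $\sin(\theta/2)\neq 0$. Writing each product $\sin(j\theta)\sin(\theta/2)$ and applying the product-to-sum identity $\sin\alpha\sin\beta=\tfrac12(\cos(\alpha-\beta)-\cos(\alpha+\beta))$ with $\alpha=j\theta$ and $\beta=\theta/2$ turns the summand into $\tfrac12\left(\cos\tfrac{(2j-1)\theta}{2}-\cos\tfrac{(2j+1)\theta}{2}\right)$, a difference of cosines whose arguments are consecutive odd multiples of $\theta/2$.

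The second step is to sum this over $j=0,\dots,m$ and observe that the sum telescopes: the subtracted cosine at index $j$ coincides with the added cosine at index $j+1$, so everything cancels except the two extremes, leaving $\tfrac12\left(\cos\tfrac{-\theta}{2}-\cos\tfrac{(2m+1)\theta}{2}\right)=\tfrac12\left(\cos\tfrac{\theta}{2}-\cos\tfrac{(2m+1)\theta}{2}\right)$. The third step converts this difference back into a product via the sum-to-product identity $\cos A-\cos B=2\sin\tfrac{A+B}{2}\sin\tfrac{B-A}{2}$; with $A=\theta/2$ and $B=(2m+1)\theta/2$ the half-sum is $(m+1)\theta/2$ and the half-difference is $m\theta/2$, so the whole expression collapses to $\sin\tfrac{m\theta}{2}\sin\tfrac{(m+1)\theta}{2}$. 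Dividing both sides by $\sin(\theta/2)$ then yields the claimed formula.

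As the text notes, a second route is available: write $\sum_{j=0}^m\sin(j\theta)=\operatorname{Im}\sum_{j=0}^m e^{ij\theta}$, evaluate the finite geometric series as $(e^{i(m+1)\theta}-1)/(e^{i\theta}-1)$, and factor $e^{i(m+1)\theta/2}$ out of the numerator and $e^{i\theta/2}$ out of the denominator to expose the two sine factors before taking the imaginary part. Either approach is short; I would present the telescoping version for its self-contained elementary character.

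I do not expect a genuine obstacle here, as the identity is standard, so the real work is bookkeeping. The one place to be careful is the endpoint matching in the telescoping sum—confirming that the $j=0$ term contributes $\cos(-\theta/2)=\cos(\theta/2)$ and that only the two extreme cosines survive—and, in the complex version, correctly pairing the conjugate exponentials so that the $2i$ factors cancel and the imaginary part emerges cleanly. Both reductions invoke $\sin(\theta/2)\neq 0$ at exactly one point, which is where the hypothesis that $\theta$ is not a multiple of $2\pi$ is essential.
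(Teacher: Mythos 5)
Your proof is correct and follows exactly the telescoping route that the paper itself sketches (multiply by $\sin(\theta/2)$, apply the product-to-sum identity, telescope, and convert back with sum-to-product), merely filling in the details the paper omits. The endpoint bookkeeping and the single invocation of $\sin(\theta/2)\neq 0$ are both handled correctly.
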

%\begin{proof}
%Let $\theta$ be any angle that is not a multiple of $2\pi.$ In the following sequence of equalities, we use the trigonometric identity \[\sin\alpha\sin\beta=\tfrac12(\cos(\alpha-\beta)-\cos(\alpha+\beta))\] to create a telescoping sum, and then we apply the identity again, except in the form \[\cos \gamma -\cos \delta=2\left(\sin\left(\tfrac{\delta+\gamma}{2}\right)\sin\left(\tfrac{\delta-\gamma}{2}\right)\right).\]
%We see that \begin{align*}
%\sum_{j=0}^m \sin (j\theta)
%&=\frac{1}{\sin\left(\frac\theta 2\right)}\sum_{j=0}^m \left(\sin (j\theta)\sin\left(\tfrac\theta 2\right)\right)\\
%&=\frac{1}{2\sin\left(\frac\theta 2\right)}\sum_{j=0}^m \left(\cos \left(\left(j-\tfrac12\right)\theta\right)-\cos \left(\left(j+\tfrac12\right)\theta\right)\right)\\
%&=\frac{1}{2\sin\left(\frac\theta 2\right)}\left(\cos \left(-\tfrac12\theta\right)-\cos \left(\left(m+\tfrac12\right)\theta\right)\right)\\
%&=\frac{\sin \left(\frac{m\theta}{2}\right)\sin \left(\frac{(m+1)\theta}{2}\right)}{\sin\left(\frac\theta 2\right)}.
%\end{align*}
%\end{proof}
We also need the following lemma, the proof of which is straightforward.

\begin{lemma}\label{lem:cordlength}
The length of a chord in a circle of radius $r$ corresponding to a central angle of $\theta$ is $2r\sin(\theta/2)$.
\end{lemma}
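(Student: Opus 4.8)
The plan is to work directly with the isosceles triangle determined by the chord and the center of the circle, which is the most elementary route. Let $O$ be the center, and let $A$ and $B$ be the endpoints of the chord, so that $OA=OB=r$ and the central angle is $\angle AOB=\theta$. First I would drop the perpendicular from $O$ to the segment $AB$, meeting it at a point $M$. Because triangle $OAB$ is isosceles, this perpendicular is simultaneously the angle bisector at $O$ and the perpendicular bisector of $AB$; hence $\angle AOM=\theta/2$ and $AM=MB=|AB|/2$.

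Next I would read off the right triangle $OMA$, which has its right angle at $M$, hypotenuse $OA=r$, and the angle $\theta/2$ at $O$ opposite the leg $AM$. The definition of the sine then gives $\sin(\theta/2)=AM/r=|AB|/(2r)$, and solving for the chord length yields $|AB|=2r\sin(\theta/2)$, exactly as claimed.

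Alternatively, one can sidestep the auxiliary point entirely by applying the law of cosines to triangle $OAB$: the chord length $c$ satisfies $c^2=2r^2(1-\cos\theta)$, and the half-angle identity $1-\cos\theta=2\sin^2(\theta/2)$ reduces this to $c^2=4r^2\sin^2(\theta/2)$. There is no genuine obstacle in either approach; the only point deserving a word of care is the extraction of the \emph{positive} square root, which is legitimate because a length is nonnegative and $\sin(\theta/2)\ge 0$ for a central angle $\theta\in[0,2\pi]$.
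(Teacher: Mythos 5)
Your proof is correct; the paper simply omits the proof of this lemma as ``straightforward,'' and your isosceles-triangle argument (with the law-of-cosines variant as a bonus) is exactly the standard derivation the author has in mind. Your attention to taking the positive square root is a nice touch but raises no real issue.
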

%\begin{proof}
%In Figure~\ref{fig:chord}, we see that the equality holds when $\theta$ is between 0 and $\pi$. When $\theta$ is between $\pi$ and $2\pi$, the length of the chord is \[2r\sin((2\pi-\theta)/2)=
%%2r\sin(\pi-\theta/2)=
%-2r\sin(\theta/2-\pi)=2r\sin(\theta/2)\] as well.
%\end{proof}
%\begin{figure}[ht]
%\includegraphics{chord.pdf}
%\caption{A chord joining central angle $\theta$ in a circle of radius $r$ has length $2r\sin(\theta/2)$.}
%\label{fig:chord}
%\end{figure}

\begin{proof}[Proof of Theorem~\ref{thm:stringlength2}]
Suppose $n$ equally-spaced points on a circle of radius $r$ are numbered $0,1,2,\ldots,n-1$ and that for each $k$, we measure the length of the segment joining the point $k$ to $ak$ modulo $n$. (Some segments may be measured twice.) Each such segment connects a point to the point $(a-1)k$ modulo $n$ points away in the numbering. The full collection of such ``distances'' are $0,a-1, 2(a-1), 3(a-1),\ldots, (n-1)(a-1)$ modulo $n$. 

If we view $\{0,1,2,\ldots,n-1\}$ as the additive cyclic group $\Z_n$, then \[G:=\{0,a-1, 2(a-1), 3(a-1),\ldots, (n-1)(a-1)\}\] is the subgroup of $\Z_n$ generated by $a-1$. This subgroup contains  $m=n/g$ elements where $g=\gcd(a-1,n)$ is the smallest positive generator of $G$. Moreover, every element of $G$ appears $g$ times in the list $0,a-1, 2(a-1), 3(a-1),\ldots, (n-1)(a-1)$ modulo $n$.

By Lemma~\ref{lem:cordlength}, the length of the chord joining two points $j$ points apart in our residue design is $2r\sin\left(\frac{\pi j}{n}\right).$ By Lagrange's identity,
\begin{align*}
S(n,a,r)&=g\sum_{i=0}^{m-1}2r\sin\left(\frac{\pi ig}{n}\right)\\
&=\frac{2rg\sin \left((m-1)\frac{\pi g}{2n}\right)\sin \left(m\cdot\frac{\pi g}{2n}\right)}{\sin\left(\frac{\pi g}{2n}\right)}\\
&=\frac{2rg\sin \left(\left(\frac{n}{g}-1\right)\frac{\pi g}{2n}\right)\sin \left(\frac{\pi}{2}\right)}{\sin\left(\frac{\pi g}{2n}\right)}\\
&=\frac{2rg\sin \left(\frac{\pi}{2}-\frac{\pi g}{2n}\right)}{\sin\left(\frac{\pi g}{2n}\right)}\\
&=\frac{2rg\cos \left(\frac{\pi g}{2n}\right)}{\sin\left(\frac{\pi g}{2n}\right)}\\
&=2rg\cot\left(\tfrac{\pi g}{2n}\right).
\end{align*}

\end{proof}

An interesting consequence of the proof of Theorem~\ref{thm:stringlength2} is that when $a-1$  and $n$ are relatively prime, such as when $n$ is a prime number, the line segments in the residue design are the same lengths as the ones leaving 0 and connecting to the other $n-1$ points on the circle, as shown in Figure~\ref{fig:chord4}. 

\begin{figure}[ht]
\includegraphics{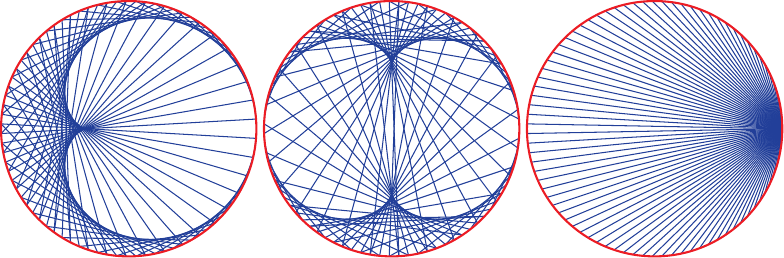}
\caption{When $a-1$ and $n$ are relatively prime, the lengths of the segments of a residue design are the same as those emanating from a single point.}
\label{fig:chord4}
\end{figure}

\section{Doubled Line Segments.}
\label{sec:doublecount}
Theorem~\ref{thm:stringlength2} gives the sum of the lengths of the line segments in a residue design. But it double-counts those segments that happen to run from $s$ to $t$ and $t$ to $s$. In this section, we investigate how to adjust the sum to account for these doubled segments.

Suppose we are given $n$ and $a$. Our focus is on integers $s$ and $t$ such that $at\equiv s\bmod n$ and $as\equiv t\bmod n$. If $t\not\equiv s\bmod n$, these correspond to segments in the design that are measured twice. If $t\equiv s\bmod n$, then the line segment is degenerate---it has no length. However, since $0+0=0$, we can, at least from an arithmetic point of view, consider this degenerate segment as doubled. Thus, the doubled or degenerate segments correspond precisely to the set \[H=H_{n,a}:=\{s\in\Z_n: a^2s\equiv s\bmod n\}.\]

We omit the straightforward proof that $H$ is a subgroup of $\Z_n$, and because $\Z_n$ is cyclic, so is $H$. If $k$ is a generator of $H$, we will use the familiar notation $H=\langle k\rangle.$

\begin{lemma}\label{lem:cyclicsubgroup}
If $n,a>0$, then $H_{n,a}$ is a cyclic subgroup of $\Z_n$.
\end{lemma}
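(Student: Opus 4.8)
The plan is to recognize $H_{n,a}$ as the kernel of an additive homomorphism and then invoke the standard classification of subgroups of cyclic groups. First I would rewrite the defining congruence: $a^2 s\equiv s\bmod n$ holds exactly when $(a^2-1)s\equiv 0\bmod n$. Thus $H_{n,a}=\{s\in\Z_n:(a^2-1)s\equiv 0\bmod n\}$ is precisely the set of elements annihilated by multiplication by $a^2-1$.

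Next I would consider the map $\phi\colon\Z_n\to\Z_n$ defined by $\phi(s)=(a^2-1)s\bmod n$. Because multiplication distributes over addition, $\phi(s+t)=\phi(s)+\phi(t)$, so $\phi$ is a homomorphism of the additive group $\Z_n$ to itself; one only needs the quick observation that it is well defined modulo $n$, which holds since $(a^2-1)(s+n)\equiv(a^2-1)s\bmod n$. Its kernel is exactly $H_{n,a}$, and since the kernel of any group homomorphism is a subgroup, $H_{n,a}$ is a subgroup of $\Z_n$. This is the ``straightforward'' subgroup verification the text defers; alternatively one checks directly that $0\in H$, that $H$ is closed under addition, and that it contains additive inverses.

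Cyclicity is then immediate from the classical theorem that every subgroup of a cyclic group is cyclic: $\Z_n$ is cyclic, generated by $1$, so $H_{n,a}$ must be cyclic as well. If one wants the generator explicitly, set $d=\gcd(a^2-1,n)$; then $(a^2-1)s\equiv 0\bmod n$ if and only if $s$ is a multiple of $n/d$, so $H_{n,a}=\langle n/d\rangle$ with $|H_{n,a}|=d$. This explicit description also dovetails with Theorem~\ref{thm:segmentlength3}, since $d=\gcd(a^2-1,n)$ is exactly the largest divisor $m$ of $n$ with $a^2\equiv 1\bmod m$.

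There is essentially no obstacle here: the entire content is the recognition that $H$ is a kernel, hence a subgroup, combined with the off-the-shelf fact about subgroups of cyclic groups. The only point demanding even a moment's care is confirming that $\phi$ is well defined on $\Z_n$, which I would dispatch in a single line as above.
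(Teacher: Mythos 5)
Your proof is correct and takes essentially the same route as the paper, which likewise observes that $H_{n,a}$ is a subgroup by a straightforward check (your kernel argument simply supplies that check) and then invokes the fact that every subgroup of the cyclic group $\Z_n$ is cyclic. Your closing observation that $H_{n,a}=\langle n/d\rangle$ with $d=\gcd(a^2-1,n)$ is a nice bonus: it yields Lemma~\ref{lem:subgroupgenerator} directly and more algebraically than the paper's later geometric argument.
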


There are many $n$ and $a$ for which there are no doubled segments. In such a case, either $H=\{0\}$ or $H$ consists only of $s$-values such that $as\equiv s$, and which contribute segments of length 0. When $n$ is prime and $a\not\equiv \pm 1\bmod n$, $H=\{0\}$ and no lines are doubled. But $n$ need not be prime for this to occur; for instance, when $n=40$ and $a=2$, $H=\{0\}$. When $n=40$ and $a=6$, $H=\langle8\rangle$, but since the line segments for 0, 8, 16, 24, and 32 are all degenerate, there are still no doubled segments.  

At the other extreme, when $a= n-1\equiv -1\bmod n$, $H=\Z_n$. In this case, every line segment is doubled except the degenerate ones at 0 and $n/2$ (if $n$ is even). The following lemma characterizes instances where every line segment is doubled or degenerate.

\begin{lemma}\label{lem:squareequivto1}
If $n,a>0$, then $H_{n,a}=\Z_n$ if and only if $a^2\equiv 1\bmod n$.
\end{lemma}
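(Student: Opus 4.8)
The plan is to prove the biconditional $H_{n,a}=\Z_n \iff a^2\equiv 1\bmod n$ directly from the definition $H=\{s\in\Z_n : a^2 s\equiv s \bmod n\}$. The key observation is that $a^2 s\equiv s\bmod n$ is equivalent to $(a^2-1)s\equiv 0\bmod n$, so membership in $H$ is a divisibility condition on $s$, namely $n \mid (a^2-1)s$.

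\medskip

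For the reverse direction, suppose $a^2\equiv 1\bmod n$, so $n\mid(a^2-1)$. Then for \emph{every} $s\in\Z_n$ we trivially have $n\mid(a^2-1)s$, hence $s\in H$; this gives $H=\Z_n$. For the forward direction, suppose $H=\Z_n$. Then in particular $1\in H$, which by the definition of $H$ means $a^2\cdot 1\equiv 1\bmod n$, i.e.\ $a^2\equiv 1\bmod n$. This is the cleanest route: rather than arguing about generators or the order of the subgroup, I would simply test the single element $s=1$, since $1$ generates all of $\Z_n$ and membership of $1$ forces the congruence on the multiplier.

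\medskip

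The main obstacle here is essentially non-existent, as the statement follows almost immediately from the definition of $H$; the only care needed is to phrase the two implications correctly and to note that $1\in\Z_n$ always belongs to $H$ precisely when the multiplier congruence holds. An equally valid alternative would be to observe that $H=\Z_n$ forces the subgroup generated by $1$ to lie in $H$, but since $\langle 1\rangle=\Z_n$ this collapses to the same one-line check. I would present the $s=1$ argument because it makes transparent why no intermediate case can occur: the condition $(a^2-1)s\equiv 0\bmod n$ holding for all $s$ is equivalent to its holding for the generator $s=1$.
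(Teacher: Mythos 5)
Your proof is correct and follows essentially the same route as the paper's: the forward direction is settled by testing $s=1$, and the reverse direction by multiplying the congruence $a^2\equiv 1\bmod n$ through by an arbitrary $s$. The divisibility phrasing $n\mid(a^2-1)s$ is just a restatement of the paper's argument, not a different method.
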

\begin{proof}
Let $n,a>0$. Suppose $H_{n,a}=\Z_n$. Then $a^2s\equiv s\bmod n$ for all $s\in \Z_n$. Because this is true for $s=1$, $a^2\equiv 1\bmod n$. Conversely, suppose $a^2\equiv 1\bmod n$. Then for all $s\in\Z_n$, $a^2s\equiv s\bmod n$. Hence $H=\Z_n.$ 
\end{proof}

Typically, for a given $n$, we have a mixture of behaviors for different $a$-values. When $n=46$, for instance, only two $a$-values produce doubled segments. When $a=45$, every segment is doubled, and when $a=22$, $H=\langle 2\rangle$, so every segment leaving an even number is doubled. However, when $n=40$, there are 20 $a$-values with nondegenerate doubled segments. When $a=3,5,7,13,15,23,27,35,37$,  $H=\langle 5\rangle$; when $a=4,14,24,34$, $H=\langle 8\rangle$; and because $a^2\equiv 1\bmod 40$ when $a=9,11,19,21,29,31,39$, $H=\Z_{40},$ and so every segment is doubled or degenerate.

The following lemma allows us to state precisely which segments are doubled or degenerate for a given $n$ and $a$.

\begin{lemma}\label{lem:subgroupgenerator}
If $n,a>0$, then $H_{n,a}=\langle \frac{n}{m}\rangle$ where $m$ is the largest divisor of $n$ for which $a^2\equiv 1\bmod m$.
\end{lemma}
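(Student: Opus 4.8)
The plan is to translate the defining condition of $H_{n,a}$ into a single divisibility condition and then read off the generator directly as a greatest common divisor. By definition, $s\in H_{n,a}$ exactly when $a^2s\equiv s\bmod n$, that is, when $(a^2-1)s\equiv 0\bmod n$, which says precisely that $n\mid (a^2-1)s$. So the first task is to determine which $s\in\Z_n$ satisfy this divisibility.

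To handle the congruence, I would set $g=\gcd(a^2-1,n)$ and apply the standard technique for linear congruences: write $a^2-1=g\alpha$ and $n=g\nu$ with $\gcd(\alpha,\nu)=1$. Then $n\mid(a^2-1)s$ becomes $g\nu\mid g\alpha s$, i.e.\ $\nu\mid\alpha s$, and since $\gcd(\alpha,\nu)=1$ this is equivalent to $\nu\mid s$, that is, $\frac{n}{g}\mid s$. Hence $H_{n,a}=\{s\in\Z_n:\tfrac{n}{g}\mid s\}=\langle\frac{n}{g}\rangle$, a cyclic subgroup of order $g$, consistent with Lemma~\ref{lem:cyclicsubgroup}.

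The last step is to recognize that this $g$ is exactly the $m$ named in the statement. A divisor $d$ of $n$ satisfies $a^2\equiv 1\bmod d$ precisely when $d\mid(a^2-1)$, so the largest divisor $m$ of $n$ with $a^2\equiv 1\bmod m$ is the largest common divisor of $n$ and $a^2-1$, namely $\gcd(a^2-1,n)=g$. Substituting $g=m$ gives $H_{n,a}=\langle\frac{n}{m}\rangle$, as claimed.

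I do not expect a serious obstacle here. The only point requiring care is the equivalence $n\mid(a^2-1)s\iff\frac{n}{g}\mid s$, where one must justify pulling out the common factor $g$ and then invoke the coprimality of $\alpha$ and $\nu$ before cancelling; everything else is routine bookkeeping about cyclic subgroups of $\Z_n$, for which we may lean on Lemma~\ref{lem:cyclicsubgroup}.
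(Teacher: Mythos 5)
Your proof is correct, but it takes a genuinely different route from the paper. You turn membership in $H_{n,a}$ into the divisibility condition $n\mid(a^2-1)s$ and solve that linear congruence directly, concluding $H_{n,a}=\langle n/g\rangle$ with $g=\gcd(a^2-1,n)$, and then observe that $m=g$ because the divisors $d$ of $n$ with $a^2\equiv 1\bmod d$ are exactly the common divisors of $n$ and $a^2-1$. The paper instead argues structurally: by Lemma~\ref{lem:cyclicsubgroup}, $H$ is cyclic of some order (call it $m$), hence $H=\langle n/m\rangle$; it then shows $a^2\equiv 1\bmod m$ by identifying the sub-design on the points of $H$ with the full residue design on $m$ points (where every segment is doubled or degenerate) and invoking Lemma~\ref{lem:squareequivto1}, finishing with a maximality argument over divisors $m'$ of $n$ satisfying $a^2\equiv 1\bmod m'$. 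Your approach buys rigor and computability: it is self-contained arithmetic, it sidesteps the paper's geometric identification step (which is left informal), it makes Lemma~\ref{lem:cyclicsubgroup} a byproduct rather than a prerequisite, and it yields the explicit formula $m=\gcd(a^2-1,n)$, which computes $m$ in Theorem~\ref{thm:segmentlength3} at a glance (e.g., for $n=56$, $a=3$: $m=\gcd(8,56)=8$). The paper's proof buys geometric insight--it explains \emph{why} the doubled segments themselves form a smaller residue design, which is exactly the phenomenon illustrated in Figure~\ref{fig:doubled}--at the cost of leaving that identification at an intuitive level. The only point in your write-up worth a remark is the degenerate case $a=1$, where $a^2-1=0$ and $g=\gcd(0,n)=n$; your argument still goes through (with $\alpha=0$, $\nu=1$), but it deserves a word since the statement allows any $a>0$.
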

\begin{proof}
Let $n$ and $a$ be positive integers. By Lemma~\ref{lem:cyclicsubgroup}, $H=H_{n,a}$ is a cyclic subgroup of $\Z_n$ of order $m$, say. So, $H=\langle j\rangle$ where $j=\frac{n}{m}.$ Now, let's look only at the segments in the residue design corresponding to the values in $H=\{0,j,2j,\ldots,(m-1)j\}.$ It is geometrically identical to the residue design with $m$ points, $\{0,1,2,\ldots,m-1\},$ also with a multiplicative factor of $a$. In this residue design, every segment is either doubled or degenerate. Thus, $H_{m,a}=\Z_m.$ By Lemma~\ref{lem:squareequivto1}, $a^2\equiv 1\bmod m$. Lastly, for any divisor $m'$ of $n$ such that $a^2\equiv 1\bmod m',$ $\langle\frac{n}{m'}\rangle\subseteq H$, and the larger $m'$, the larger the set $\langle\frac{n}{m'}\rangle$. We obtain $H$ from the largest such divisor, $m.$
\end{proof}

To illustrate Lemma~\ref{lem:subgroupgenerator} and its proof, consider $n=56$ and $a=3$. The divisors of 56 are 1, 2, 4, 7, 8, 14, 28, and 56. When we compute $a^2=9$ modulo these values, we obtain 1 only for 1, 2, 4, and 8. So, $m=8$, and $H=\langle\frac{56}{8}\rangle=\langle 7\rangle$. Indeed, the segments leaving 7, 14, 21, 35, 42, and 49 are all doubled, and those leaving 0 and 28 have length zero. As shown in Figure~\ref{fig:doubled}, the doubled and degenerate segments are precisely the same as the full residue design with $m=8$ points and $a=3.$

\begin{figure}[ht]
\includegraphics{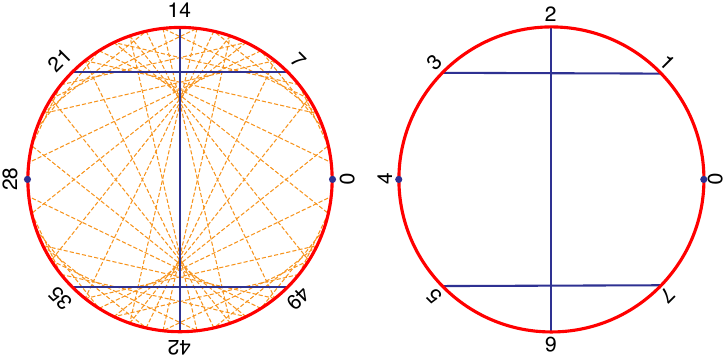}
\caption{The doubled or degenerate segments (the solid lines and the marked points on the circle) for $n=56$ and $a=3$ are the same as the full residue design for $n=8$ and $a=3$.}
\label{fig:doubled}
\end{figure}

\begin{proof}[Proof of Theorem~\ref{thm:segmentlength3}] To prove this theorem, we must simply subtract off those segments that are double-counted by the the formula in Theorem~\ref{thm:stringlength2}. By Lemma~\ref{lem:subgroupgenerator}, \[L(n,a,r)=S(n,a,r)-\tfrac12S(m,a,r).\]
\end{proof}

\begin{proof}[Proof of Corollary~\ref{cor:stringlength}]
The series expansion for the cotangent function (\cite[p.~123]{Aigner:2001}) is\[\cot x=\frac{1}{x}+\sum_{i=1}^{\infty}\frac{(-1)^i2^{2i}B_{2i}}{(2i)!}x^{2i-1}=\frac1x-\frac{x}{3}-\frac{x^3}{45}\cdots\] for $0<x<\pi$, where $B_n$ is the $n$th Bernoulli number. Moreover, because $g<\sqrt{n},$  \[S(n,a,r)=2rg\cot\left(\frac{\pi g}{2n}\right)=\frac{4 n r}{\pi}-\frac{\pi r g^2}{3n}-\frac{\pi^3g^4r}{180n^3}-\cdots\approx \frac{4 n r}{\pi}.\]
By Theorem~\ref{thm:segmentlength3}, the string length required is
\[L(n,a,r)=S(n,a,r)-\tfrac12 S(m,a,r)\approx \frac{4 n r}{\pi}-\frac{2 m r}{\pi}=\frac{(4 n-2m) r}{\pi}.\]
\end{proof}

Returning again to our example with $n=56$ and $a=3$, shown in Figure~\ref{fig:doubled}, and assuming the circle has radius $r=5$~cm, we find $m=8$, $g_1=\gcd(2,56)=2$, and $g_2=\gcd(2,8)=2$. So, the total length of line segments is 
\[L(56,3,5)=S(56,3,5)-\frac{S(8,3,5)}{2}=20\cot\left(\frac{\pi}{56}\right)-10\cot\left(\frac{\pi}{8}\right)\approx 332\text{ cm}.\]
Using the approximation in Corollary~\ref{cor:stringlength} we find that \[L(56,3,5)\approx (4\cdot 56-2\cdot 8)5/\pi\approx 331\text{ cm}.\]

\section{Residue Designs with One String.}
\label{sec:onestring}
Residue designs with $n$ points have $n$ line segments (some of which, like the segment from 0 to 0, may have zero length), but we would not want to use $n$ pieces of string to make the design. It would be better to find an $n$-value so we can run the string sequentially from nail to nail to nail with a small number of strings---preferably only one.

For instance, the cardioid design in Figure~\ref{fig:chord5} requires only one string. It runs from nail 1 to 2 to 4 to 8, and so on (the progression continues: 16, 32, 64, 45, 7, 14, 28, 56, 29, 58, 33, 66, 49, 15, 30, 60, 37, 74, 65, 47, 11, 22, 44, 5, 10, 20, 40, 80, 77, 71, 59, 35, 70, 57, 31, 62, 41, 82, 81, 79, 75, 67, 51, 19, 38, 76, 69, 55, 27, 54, 25, 50, 17, 34, 68, 53, 23, 46, 9, 18, 36, 72, 61, 39, 78, 73, 63, 43, 3, 6, 12, 24, 48, 13, 26, 52, 21, 42, 1). The nephroid in Figure~\ref{fig:chord5} can also be made with one string, whereas the trefoiloid requires two.

The article \cite{Richeson:2023} addresses how many strings a residue design requires. In particular, a residue design with $n$ points and multiplicative factor $a$ can be strung with one string if and only if $n$ is prime and $a$ is a primitive root modulo $n$; that is, $a$ is a generator of the cyclic group $\Z_n^\times.$ For $a=2$, these primes are 3, 5, 11, 13, 19, 29, 37, 53, 59, 61, 67, 83, and so on. For $a=3$, the primes are 5, 7, 17, 19, 29, 31, 43, 53, 79, 83, and so on. When $a=4$, there are no such primes.

Given the results in \cite{Richeson:2023}, the following corollary follows immediately from Theorem~\ref{thm:stringlength2} and Corollary~\ref{cor:stringlength}.

\begin{corollary}\label{cor:prime}
Let $a$ be a primitive root modulo a prime number $p$. One string of length $2r\cot(\tfrac{\pi}{2p})\approx 4pr/\pi$ suffices to make a string-art residue design with $p$ nails in a circle of radius $r$ and using a multiplicative factor $a$.
\end{corollary}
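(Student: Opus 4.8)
The plan is to combine two ingredients that are already in hand: the hypothesis from \cite{Richeson:2023} telling us exactly when a single string suffices, and the length formula from Theorem~\ref{thm:stringlength2}. The statement to prove is Corollary~\ref{cor:prime}, which asserts that when $a$ is a primitive root modulo a prime $p$, one string of length $2r\cot(\tfrac{\pi}{2p})\approx 4pr/\pi$ makes the design. Since this is labeled a corollary and the text explicitly says it ``follows immediately,'' I expect the work to be almost entirely bookkeeping rather than a fresh argument.

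First I would invoke the cited result: a residue design with $n$ points and factor $a$ can be strung with a single continuous string precisely when $n$ is prime and $a$ is a primitive root modulo $n$. Under our hypotheses $n=p$ is prime and $a$ is a primitive root, so the single-string condition is met, and the total string length is exactly the sum of the segment lengths. Next I would observe that for a prime $p$, every nondegenerate segment is distinct, so there is no double-counting and the relevant quantity is simply $S(p,a,r)$ rather than $L(p,a,r)$. Concretely, $g=\gcd(a-1,p)$ is either $1$ or $p$; since $a$ is a primitive root we have $a\not\equiv 1\bmod p$, forcing $g=1$.

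With $g=1$, Theorem~\ref{thm:stringlength2} gives $S(p,a,r)=2r\cot\!\left(\tfrac{\pi}{2p}\right)$ directly, which is the exact length claimed. For the approximation, I would quote the cotangent series expansion exactly as in the proof of Corollary~\ref{cor:stringlength}: with $g=1$ and $m=n=p$, the leading term of $2r\cot(\tfrac{\pi}{2p})$ is $4pr/\pi$ and the correction is $O(1/p)$, giving $2r\cot(\tfrac{\pi}{2p})\approx 4pr/\pi$. (Equivalently, one can note that $m=p$ in the prime case, so the $-2mr/\pi$ correction term in Corollary~\ref{cor:stringlength} is absorbed and the estimate reduces to $4pr/\pi$ up to lower-order terms.)

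The only place that requires genuine care, rather than pure substitution, is confirming that $g=1$---that is, ruling out the degenerate case $a\equiv 1\bmod p$. This is immediate from the primitive-root hypothesis, since a primitive root has multiplicative order $p-1>1$ and hence is not congruent to $1$; I would state this explicitly so the reader sees why the clean formula $2r\cot(\tfrac{\pi}{2p})$ applies rather than the degenerate $S(p,a,r)=0$. Beyond that single check, no real obstacle remains: the corollary is a direct specialization of Theorem~\ref{thm:stringlength2} and Corollary~\ref{cor:stringlength} to the prime, primitive-root setting.
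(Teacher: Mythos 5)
Your main line matches the paper's: Corollary~\ref{cor:prime} is stated there with no separate proof, as an immediate consequence of the one-string criterion from \cite{Richeson:2023}, Theorem~\ref{thm:stringlength2} with $g=\gcd(a-1,p)=1$, and the approximation in Corollary~\ref{cor:stringlength}. Your explicit check that a primitive root satisfies $a\not\equiv 1\bmod p$, hence $g=1$, is exactly the bookkeeping the paper leaves implicit, and your expansion $2r\cot(\tfrac{\pi}{2p})=4pr/\pi+O(1/p)$ is correct.

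However, your claims about $m$ are wrong, and the parenthetical ``equivalent'' route would fail. For $p$ prime and $a$ a primitive root with $p>3$, the order of $a$ is $p-1>2$, so $a^2\not\equiv 1\bmod p$; hence the largest divisor $m$ of $p$ with $a^2\equiv 1\bmod m$ is $m=1$, not $m=p$. Indeed, $m=p$ would mean, by Lemma~\ref{lem:squareequivto1}, that every segment is doubled or degenerate, contradicting your own (correct) observation that there is no double-counting; and plugging $m=p$ into Corollary~\ref{cor:stringlength} gives $L\approx(4p-2p)r/\pi=2pr/\pi$, \emph{half} the claimed length---the correction term is not ``absorbed'' at all. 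With the correct value $m=1$, Theorem~\ref{thm:segmentlength3} gives $L(p,a,r)=S(p,a,r)-\tfrac12 S(1,a,r)=S(p,a,r)$ exactly, since $S(1,a,r)=2r\cot(\pi/2)=0$, and Corollary~\ref{cor:stringlength} gives $L\approx(4p-2)r/\pi\approx 4pr/\pi$, as desired. (A caveat that both you and the paper gloss over: for $p=3$, $a=2$, one does have $a^2\equiv 1\bmod 3$, so $m=3$ and the chord from $1$ to $2$ is doubled; the stated one-string length $2r\cot(\pi/6)$ still suffices only because the sequential stringing $1\to 2\to 1$ traverses that chord twice.)
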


This shows, for instance, that to make the cardioid or nephroid in Figure~\ref{fig:chord5} in a circle of radius of 5~cm, we would need one string of length $2\cdot 5\cdot \cot(\frac{\pi}{2\cdot83})\approx 528$~cm.

%%%%%%%%%%%%%%%%%%%%%%%%%%%%%%%%%%%%%%%
% References %
    
{\setlength{\baselineskip}{13pt} % tighten line spacing for bibliography
\raggedright				% no right justification for References
\bibliographystyle{plain}
\bibliography{HowMuchString}
} % end setlength, raggedright

\end{document}